\theoremstyle{plain}
\newtheorem*{lem}{Lemma}
\newtheorem*{prop}{Proposition}
\newtheorem*{cor}{Corollary}
\newtheorem*{thrm}{Theorem}
\theoremstyle{Definition}
\newtheorem*{example}{Example}
\newtheorem*{defn}{Definition}
\newcommand{\spec}{\mathrm{spec}}
\title[Monoid spectra]{A topological description of the space of prime ideals of a monoid}
\author{Richard Vale}
\address{Department of Mathematics, Cornell University, Malott Hall, Ithaca N.Y. 14853-4201, United States}
\subjclass[2010]{20M32,54H12}
\email{rv83@cornell.edu}
\begin{document}
\begin{abstract}
We describe which topological spaces can arise as the prime spectrum of a commutative monoid, in the spirit of Hochster's and Brenner's theses.
\end{abstract}
\maketitle
\section{Introduction}
\subsection{}
In this note, all monoids are commutative and the unit element of a monoid $M$ is written as $1$. Associated to a commutative monoid is a space called the prime spectrum. We will refer to it as the \emph{Kato spectrum} in order to avoid confusion with the prime spectrum of a commutative ring.
\begin{defn}
Let $M$ be a monoid. A subset $I \subset M$ is called an \emph{ideal} if for all $x \in I$ and $m \in M$, we have $xm \in I$. An ideal $p$ is \emph{prime} if $M \setminus p$ is a submonoid of $M$ (in particular, $1 \in M \setminus p$.)

We define $\spec(M)$ to be the set of all prime ideals of $M$. For $f \in M$, we define
$$D(f)=\{ p \in \spec(M): f \notin p\}.$$
The \emph{Kato spectrum} of $M$ is the set $\spec(M)$ with the topology with a base given by the $D(f)$.
\end{defn}
The Kato spectrum can also be equipped with a sheaf of monoids, but in this paper we are concerned only with the underlying space.
\subsection{}
The notion of prime ideal in a commutative semigroup goes ``back to antiquity" according to \cite{Grillet}. The Zariski topology on the set of prime ideals goes back at least to Kist \cite{Kist}.
The Kato spectrum was introduced by Kato \cite{Kato} in the study of toric singularities. It was later used by Deitmar \cite{Deitmar} to construct a theory of ``schemes over the field with one element". %Recently, Vezzani \cite{Vezzani} showed that Deitmar's geometry is equivalent to a more categorical notion due to To\"{e}n and Vaiqu\'{e}.

If $R$ is a commutative ring and $M$ denotes the underlying multiplicative monoid $(R,\cdot)$ of $R$ then $\spec(M)$ is the set of unions of prime ideals of $R$. This space has appeared in some constructions of spaces associated functorially to an arbitrary ring. See \cite{Aryapoor} and references therein.
\subsection{}\label{Hochster}
The aim of the present paper is to prove an analogue of the following theorem of Hochster. In this paper, we say a topological space $X$ is \emph{compact} if every open cover of $X$ has a finite subcover (in algebraic geometry, this is often called quasi-compactness.)
\begin{thrm}\cite{Hochster}
A topological space $X$ is homeomorphic to $\mathrm{Spec}(R)$ for some commutative ring $R$ if and only if the following three properties hold.
\begin{itemize}
\item $X$ is $T_0$.
\item The set of open compact subsets of $X$ is a base of $X$ which contains $X$ and is closed under finite intersections.
\item Every irreducible closed subset of $X$ is the closure of a unique point.
\end{itemize}
\end{thrm}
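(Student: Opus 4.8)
The plan is to prove both implications, treating the ``only if'' direction as routine verification and attacking the substantive ``if'' direction through the distributive lattice of quasi-compact opens together with a limit construction on rings.

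First the easy direction. Supposing $X = \mathrm{Spec}(R)$, I would check the three conditions directly from the standard behaviour of the Zariski topology. The sets $D(f) = \{\mathfrak p : f \notin \mathfrak p\}$ form a base, with $D(f) \cap D(g) = D(fg)$ and $X = D(1)$, and each $D(f)$ is quasi-compact because a cover of $D(f)$ by further basic opens translates into an equation expressing a power of $f$ as a finite $R$-combination of the covering functions. This gives the second condition. The space is $T_0$ because two distinct primes differ in some element, which then separates them by a basic open. Finally a closed set $V(I)$ is irreducible precisely when $\sqrt{I}$ is prime, in which case $\mathfrak p = \sqrt{I}$ is its unique generic point, uniqueness following from $T_0$-ness. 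Hence spectra of rings satisfy all three conditions.

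For the hard direction I would call a space satisfying the three conditions \emph{spectral}. The guiding idea is that all the topological content of a spectral space $X$ is stored in the lattice $L(X)$ of its quasi-compact open subsets: finite unions of quasi-compact opens are quasi-compact and finite intersections are quasi-compact by hypothesis, so $L(X)$ is a sublattice of the power set, hence a bounded distributive lattice containing $\emptyset$ and $X$; and by $T_0$-ness together with sobriety one can reconstruct $X$ from $L(X)$ as its space of prime filters, via spectral (Stone) duality for distributive lattices. It therefore suffices to realize an arbitrary bounded distributive lattice $L$ as the lattice of quasi-compact opens of some $\mathrm{Spec}(R)$. To do this I would pass to the limit in two stages. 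Every bounded distributive lattice is the filtered colimit of its finitely generated, hence finite, sublattices $L_i$; by Birkhoff duality each finite $L_i$ is the lattice of a finite $T_0$ space, i.e. of a finite poset $P_i$. The first stage is to realize each finite poset $P_i$ as $\mathrm{Spec}(R_{P_i})$ for an explicit ring, for instance built from a field with indeterminates indexed by the covering relations of $P_i$ so that the primes match the down-sets of $P_i$. The second stage is to use that $\mathrm{Spec}$ carries a filtered colimit of rings $\varinjlim R_{P_i}$ to the cofiltered limit $\varprojlim \mathrm{Spec}(R_{P_i})$ of the associated spectral spaces along the induced spectral maps, and that a cofiltered limit of spectral spaces is again spectral. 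Choosing the ring maps $R_{P_i} \to R_{P_j}$ to induce the lattice inclusions $L_i \hookrightarrow L_j$, the colimit ring $R = \varinjlim R_{P_i}$ then satisfies $\mathrm{Spec}(R) \cong \varprojlim \mathrm{Spec}(R_{P_i}) \cong X$.

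The main obstacle, I expect, is the realization of finite posets by explicit rings together with the compatibility of $\mathrm{Spec}$ with the filtered colimit. Producing a ring whose prime lattice is exactly a prescribed finite poset is elementary but fiddly; the genuinely delicate point is showing that both the underlying set and the topology of $\mathrm{Spec}(\varinjlim R_{P_i})$ coincide with the inverse limit $\varprojlim \mathrm{Spec}(R_{P_i})$, rather than merely admitting a continuous bijection to it. Establishing that cofiltered limits of spectral spaces along spectral maps are spectral, and that the reconstruction $X \cong \varprojlim \mathrm{Spec}(R_{P_i})$ is an actual homeomorphism, is exactly where the quasi-compactness of the basic opens and the sobriety hypothesis must be deployed with care.
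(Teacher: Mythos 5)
The paper does not actually prove this statement: it is quoted as background from Hochster's 1969 article \cite{Hochster}, so there is no internal proof to compare yours against. Judged on its own terms, your outline follows the standard route to Hochster's theorem, and it is close in spirit to Hochster's own, who shows that the spaces in question are exactly the cofiltered limits of finite $T_0$ spaces and then realizes these by rings. The easy direction is correct as written. In the hard direction, the reduction to the bounded distributive lattice $L(X)$ of quasi-compact opens via Stone duality (using sobriety), the presentation of $L(X)$ as a filtered colimit of its finite sublattices, Birkhoff duality for the finite pieces, and the fact that $\mathrm{Spec}$ carries a filtered colimit of rings to the cofiltered limit of the spectra (as topological spaces, not merely as sets) are all correct and standard.

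The one point in your plan that is a genuine gap rather than a fiddly verification is the assembly of the directed system of rings. It does not suffice to realize each finite poset $P_i$ as $\mathrm{Spec}(R_{P_i})$ one at a time: to form $\varinjlim R_{P_i}$ you must produce ring homomorphisms $R_{P_i} \rightarrow R_{P_j}$ whose induced maps on spectra are the prescribed surjections $P_j \rightarrow P_i$ dual to the inclusions $L_i \hookrightarrow L_j$, and these must commute over the entire filtered index category. An ad hoc realization of each finite poset gives no such maps, and there is no a priori reason that an arbitrary spectral map of finite posets between two chosen realizations lifts to a ring homomorphism. Making the finite realization functorial (or at least coherent on the index diagram) is precisely the technical heart of Hochster's argument; until that is supplied, the colimit ring $R=\varinjlim R_{P_i}$ is not defined and the proof does not close.
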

Theorem \ref{Hochster} is useful in constructing examples of affine schemes whose underlying spaces behave pathologically. It also began the study of \emph{spectral spaces}.
\subsection{}\label{mainthm}
Various authors have generalized Hochster's Theorem to other kinds of spectrum. For example, Hochster \cite{Hochster2} proved an analogue of the theorem with $\mathrm{Spec}$ replaced by the set of minimal prime ideals of a ring and Echi \cite{Echi} proved an analogue for the Goldman spectrum of a ring. Also, Kist \cite{Kist} studied topological properties of the space of \emph{minimal} prime ideals in a commutative semigroup. Brenner \cite{Brenner} proved the following analogue of Hochster's Theorem for monoids.%The aim of the current paper is to prove a version of Theorem \ref{Hochster} for monoids.
\begin{defn}\label{blobdef}
Let $X$ be a topological space. A subset $A\subset X$ is called a \emph{blob} if there exists $a \in X$ such that $A$ is the intersection of all open subsets of $X$ which contain $a$.
\end{defn}
\begin{thrm}\cite[Satz 2.3.2]{Brenner}
A topological space $X$ is homeomorphic to $\spec(M)$ for some commutative monoid $M$ if and only if the following properties hold
\begin{itemize}
\item $X$ is $T_0$.
\item The set of open blobs of $X$ is a base of $X$ which contains $X$ and is closed under finite intersections.
\item Every intersection of irreducible closed subsets of $X$ is the closure of a unique point.
\end{itemize}
and $X$ also satisfies $(*)$ where
\begin{align*}
(*) &\text{ If } \{U_\lambda: \lambda \in \Lambda\} \text{ is a collection of open blobs in } X \text{ and } U \text{ is an open set }\\ &\text{ with }\bigcap_\lambda U_\lambda \subset U,
\text{ then there exist } \lambda_1, \ldots, \lambda_n \in \Lambda \text{ with } \bigcap_{i=1}^n U_{\lambda_i} \subset U.
\end{align*}
\end{thrm}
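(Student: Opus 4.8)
The statement is an equivalence, so the plan is to treat necessity (every Kato spectrum satisfies the four properties) and sufficiency (a space with the four properties is a Kato spectrum) separately. The backbone of both halves is the dictionary between primes and \emph{faces}: a subset $p\subset M$ is prime iff its complement $F=M\setminus p$ is a saturated submonoid (a submonoid with $xy\in F\Rightarrow x,y\in F$), and under complementation $p\subseteq q$ iff $M\setminus q\subseteq M\setminus p$. I would first record that $\overline{\{p\}}=\{q:q\supseteq p\}$, so $\spec(M)$ is $T_0$ and its specialization order is inclusion of primes.

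For necessity I would next pin down the blobs. The face $F_f$ generated by $f$ is the set of divisors of powers of $f$, the smallest face containing $f$, so $p_f:=M\setminus F_f$ is the \emph{largest} prime missing $f$; computing the blob of a point $p$ as $\{q:q\subseteq p\}$, one finds that $D(f)$ is exactly the blob of $p_f$, and conversely any open blob, being a union of basic opens and containing its own defining point, collapses to a single $D(f)$. Hence the open blobs of $\spec(M)$ are precisely the sets $D(f)$; since $D(1)=\spec(M)$ and $D(f)\cap D(g)=D(fg)$, the ``base/contains $X$/closed under finite intersection'' property is immediate. The third property follows from two observations: every irreducible closed set $C$ is the closure of a point (its generic point is $\bigcap_{q\in C}q$, which is prime precisely because $C$ is irreducible), and a union of primes is again prime (its complement is an intersection of faces, hence a face); together these give $\bigcap_\lambda\overline{\{p_\lambda\}}=\overline{\{\bigcup_\lambda p_\lambda\}}$, a point-closure, with uniqueness from $T_0$. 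For $(*)$ I would use that the face generated by a family is the set of divisors of finite products of its members: if $\bigcap_\lambda D(f_\lambda)\subseteq U$, then the largest prime missing every $f_\lambda$ lies in some basic open $D(g)\subseteq U$, which forces $g\mid f_{\lambda_1}\cdots f_{\lambda_k}$ for finitely many indices and hence $D(f_{\lambda_1})\cap\cdots\cap D(f_{\lambda_k})=D(f_{\lambda_1}\cdots f_{\lambda_k})\subseteq D(g)\subseteq U$.

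For sufficiency the key idea is that the hypotheses turn the set $\mathcal B$ of open blobs into a commutative monoid under intersection, with identity $X$. I would define $\phi:X\to\spec(\mathcal B)$ by $\phi(x)=\{U\in\mathcal B:x\notin U\}$, which is readily checked to be a prime ideal, and verify injectivity from $T_0$ (a blob separating $x$ from $y$ lies in exactly one of $\phi(x),\phi(y)$). Continuity and openness are then formal: for $U\in\mathcal B$ one has $\phi^{-1}(D(U))=U$ and $\phi(U)=D(U)$, and since the $U$'s and the $D(U)$'s are bases of the two spaces, $\phi$ becomes a homeomorphism as soon as it is bijective.

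The crux, and the step I expect to be the main obstacle, is surjectivity of $\phi$. Given a prime ideal $Q$ of $\mathcal B$ with complementary face $S$, I must produce a point $x$ lying in every $U\in S$ and in no $V\in Q$, since such an $x$ satisfies $\phi(x)=Q$ exactly; equivalently I must show $\bigcap_{U\in S}U\not\subseteq\bigcup_{V\in Q}V$. This is precisely where $(*)$ enters. If the inclusion held, $(*)$ would yield finitely many $U_1,\dots,U_n\in S$ with $U_0:=U_1\cap\cdots\cap U_n\subseteq\bigcup_{V\in Q}V$; but $U_0\in S$ is itself an open blob, say $U_0=B_a$ is the blob of a point $a$, so $a$ lies in some $V\in Q$, whence $U_0=B_a\subseteq V$ and the ideal property forces $U_0\in Q$, contradicting $U_0\in S$. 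Thus $(*)$ is exactly the device that converts the failure of an infinite-intersection containment into a finite one that the blob structure can refute; once surjectivity is in hand the remaining verifications are routine.
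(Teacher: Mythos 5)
Your proof is correct, and its necessity half runs parallel to the paper's (open blobs of $\spec(M)$ are exactly the sets $D(f)$, irreducible closed sets have generic points, unions of primes are prime, and $(*)$ is verified via the face generated by the $f_\lambda$ --- the last being exactly the ``exercise'' the paper sketches with the ideal $q$ of divisors of finite products). The sufficiency half, however, takes a genuinely leaner route. The paper first develops the category $\mathbb{M}$, the exponential functor $E$, and the notion of $\mathbb{M}$--complete join semilattice, uses the third condition of the theorem to equip $(X,B)$ with suprema, and then proves bijectivity of $\varphi(x)=\{U\in B: x\notin U\}$ by exhibiting the explicit inverse $\theta(p)=\sup\bigl(\bigcap_{W\in B\setminus p}W\bigr)$, with $\theta\varphi=\mathrm{id}$ coming from the sup-structure and $\varphi\theta=\mathrm{id}$ from $(*)$. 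You use the same monoid $(\mathcal{B},\cap)$ and the same map $\varphi$, but you get injectivity directly from $T_0$ plus the base property, and surjectivity by the contradiction argument ``if $\bigcap_{U\in S}U\subseteq\bigcup_{V\in Q}V$ then $(*)$ produces a finite subintersection $U_0\in S$, which is a blob, hence sits inside some $V\in Q$ and is absorbed into $Q$.'' This bypasses the join-semilattice machinery entirely, and it has a noteworthy byproduct: your sufficiency argument never invokes the third bullet (intersections of irreducible closed sets are point closures), so that condition is redundant given $T_0$, the blob-base condition, and $(*)$ --- it is recovered a posteriori from the necessity direction once $X\cong\spec(\mathcal{B},\cap)$ is established. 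What the paper's heavier route buys in exchange is the structural theory (the universal property of $E$, the characterization of which spaces satisfy the first three conditions, and Corollary \ref{finitecase}), which your argument does not reproduce but which is not needed for the theorem itself. One cosmetic slip: in your verification of $(*)$ for $\spec(M)$, membership of $g$ in the face generated by the $f_\lambda$ gives $gh=f_{\lambda_1}^{k_1}\cdots f_{\lambda_n}^{k_n}$ (divisibility of a product of \emph{powers}, not of the bare product), but since $D(f^k)=D(f)$ the conclusion $\bigcap_{i}D(f_{\lambda_i})\subseteq D(g)$ is unaffected.
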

\subsection{}
In this note, we will show that the first three properties in Theorem \ref{mainthm} may be expressed by saying that $X$ possesses a certain algebraic structure, essentially coming from the fact that the set of prime ideals of a monoid is closed under union. This yields a nice description of those spaces which can be the prime spectrum of a finitely-generated monoid. See Corollary \ref{finitecase}.
\subsection{Schemes over $\mathbb{F}_1$}
%Theorem \ref{mainthm} may be useful in understanding the nature of the spaces $\spec(M)$ which play the role of affine schemes in Deitmar's geometry.
Vezzani \cite{Vezzani} has recently shown that Deitmar's definition of a scheme over $\mathbb{F}_1$ is equivalent to another definition due to To\"{e}n and Vaiqu\'{e} \cite{TV}, in which schemes are constructed by glueing together objects of the opposite of the category of monoids via a general construction which mimics (and generalizes) the usual definition of a scheme. See \cite[Section 1.2]{Pena-Lorscheid} for a fuller description of this. Theorem \ref{mainthm} may therefore be useful in understanding the nature of the spaces which play the role of affine schemes over $\mathbb{F}_1$ in these theories.
\section{Exponentiation}
\subsection{}
In this section, we characterize those spaces which satisfy the first three conditions of Theorem \ref{mainthm}, in order to understand how the condition $(*)$ fits in.%In this section, before proving \ref{mainthm}, we first characterize those spaces which satisfy the first three conditions of Theorem \ref{mainthm}. Among these, we will single out those which are monoid spectra by adding the condition $(*)$.
\begin{defn}
A poset $(P, \le)$ is called a \emph{join semilattice} if every pair of elements $x,y \in P$ has a least upper bound $\sup\{x,y\} \in P$. A join semilattice $P$ is \emph{complete} if every $A \subset P$ has a least upper bound $\sup(A) \in P$.

By definition, a map $f: (P, \le_P) \rightarrow (Q, \le_Q)$ of complete join semilattices is an order-preserving function which also satisfies $f(\sup(A))=\sup(f(A))$ for all $A \subset P$.
\end{defn}
\begin{defn}
Let $X$ be a topological space. We call a base $B$ of open sets of $X$ \emph{monoidal} if $X \in B$ and $B$ is closed under finite intersections.
\end{defn}
\subsection{}
We define a category $\mathbb{M}$ as follows. Objects of $\mathbb{M}$ are pairs $(X,B)$ where $X$ is a $T_0$ space and $B$ is a monoidal base of $X$. A morphism $(X,B) \rightarrow (Y,C)$ is a function $f:X \rightarrow Y$ such that $f^{-1}(U) \in B$ for all $U \in C$.
\begin{defn}\label{mcjsdef}
We call an object $(X,B)$ of $\mathbb{M}$ an \emph{$\mathbb{M}$--complete join semilattice} if there is a partial order $\le$ on $X$ such that $(X, \le)$ is a complete join semilattice, and such that for all $A \subset X$ and all $U \in B$, we have
$$A \subset U \iff \sup(A) \in U.$$
A morphism of $\mathbb{M}$--complete join semilattices is a morphism in $\mathbb{M}$ which is also a map of complete join-semilattices (ie. preserves suprema.)
\end{defn}
\subsection{}\label{monoid example}
Recall that the \emph{specialization order} $\le$ on a $T_0$ space $X$ is defined by $x \le y$ if and only if for all open sets $U$, $y \in U \implies x \in U$. Alternatively, $x \le y$ if and only if $y \in \overline{\{x\}}$. One can check that if the partial order $\le$ makes $(X,B)$ into an $\mathbb{M}$--complete join semilattice, then in fact $\le$ must be the specialization order.
\begin{example}\begin{rm}
Let $M$ be a commutative monoid. Let $X=\spec(M)$ and let $B=\{D(f): f \in M\}$, a monoidal base of $X$. Define a partial order on $X$ by $p \le q$ if and only if $p \subset q$. Then $(X,B)$ is an $\mathbb{M}$--complete join semilattice, because if $A \subset X$, we may take $\sup(A)=\bigcup_{p \in A} p$, which is a prime ideal of $M$.
\end{rm}\end{example}
We aim to show that each object $(X,B)$ of $\mathbb{M}$ can be completed to an $\mathbb{M}$--complete join semilattice. %and that the underlying spaces of such completions are precisely the spaces which arise as $\spec(M)$ for some monoid $M$.
\subsection{The exponential}\label{expdef}
We define a functor $E: \mathbb{M} \rightarrow\mathbb{M}$ as follows. Let $(X,B)$ be an object of $\mathbb{M}$. Let $\mathcal{P}(X)$ be the power set of $X$. Let $B_0=\{\mathcal{P}(U): U \in B\}$. Then $B_0$ is a base for a topology on $\mathcal{P}(X)$. %Let $E_0(X,B)=(\mathcal{P}(X), B_0)$.
The resulting topological space $\mathcal{P}(X)$ may not be $T_0$, but we define an equivalence relation on $\mathcal{P}(X)$ by $A \sim B$ if and only if $A$ and $B$ belong to the same open sets of $\mathcal{P}(X)$.
\begin{defn}
The \emph{exponential} $E(X,B)$ is the quotient space $\mathcal{P}(X)/\sim$ equipped with the monoidal base $\widetilde{B}=\{\widetilde{U}: U \in B\}$ where for $U \in B$, we define
$$\widetilde{U} = \{[A] \in \mathcal{P}(X)/\sim \text{ such that } A \subset U\}.$$
Here, we write $[A]$ for the equivalence class in $\mathcal{P}(X)/\sim$ of $A \in \mathcal{P}(X)$.
\end{defn}
\begin{example}
\begin{rm}
Let $X=\{x\}$ be a one-point space. Let $B=\{\varnothing, X\}$, a monoidal base of $X$. Then $\mathcal{P}(X)=\{\varnothing, X\}$ with base $\{\{\varnothing\}, \mathcal{P}(X)\}$. This space is $T_0$, and so $E(X,B)= \mathcal{P}(X)$ is the two-point Sierpinski space. If we had taken the base $B'=\{X\}$ instead, we would get $E(X,B') \cong (X,B')$. Thus, $E(X,B)$ depends on the choice of the base $B$.
\end{rm}
\end{example}
\subsection{}
The following example was the original motivation for defining $E$.
\begin{example}
\begin{rm}
Let $R$ be a commutative ring and let $X=\mathrm{Spec}(R)$. For $f \in R$, let $D(f)=\{p \in \mathrm{Spec}(R): f \notin p\}$. Let $B$ be the base of $X$ consisting of all the $D(f)$. Let $M= (R,\cdot)$ be the underlying multiplicative monoid of $R$. Then $E(X,B) \cong \spec(M)$.
\end{rm}
\end{example}
\subsection{}
It is easy to see that $E: \mathbb{M} \rightarrow \mathbb{M}$ is a functor and that if $(X,B)$ is an object of $\mathbb{M}$ then there is a map $i=i_{(X,B)} : (X,B) \rightarrow E(X,B)$ defined by $i(x)=[\{x\}]$, the equivalence class of the singleton $\{x\}$. The map $i_{(X,B)}$ is injective and its image is dense in $E(X,B)\setminus\{[\varnothing]\}$. The $i_{(X,B)}$ define a natural transformation $id \rightarrow E$ of functors $\mathbb{M} \rightarrow \mathbb{M}$.
\subsection{}
If $(X,B)$ is an object of $\mathbb{M}$ then $E(X,B)$ is an $\mathbb{M}$--complete join semilattice under the specialization order. Indeed, the reader can check that if $A_\lambda$, $\lambda \in \Lambda$ are subsets of $X$, then the supremum of $\{[A_\lambda]: \lambda \in \Lambda\}$ is $\left[\bigcup_{\lambda \in \Lambda} A_\lambda\right]$.
\subsection{}\label{Euniversal}
We now show that $E(X,B)$ is the smallest $\mathbb{M}$--complete join semilattice which contains $(X,B)$.
\begin{prop}
Let $(X,B)$ be an object of $\mathbb{M}$.

The natural map $i :(X,B) \rightarrow E(X,B)$ is the universal map from $(X,B)$ to an $\mathbb{M}$--complete join-semilattice in the following sense:

If $\theta: (X,B) \rightarrow (Y,C)$ is a map in $\mathbb{M}$ and $(Y,C)$ is an $\mathbb{M}$--complete join-semilattice, then there exists a unique map of $\mathbb{M}$--complete join semilattices $\widehat{\theta}: E(X,B) \rightarrow (Y,C)$ such that the following diagram commutes.
$$\xymatrix{
(X,B) \ar[d]^i \ar[r]^\theta & (Y,C)\\
E(X,B) \ar[ur]_{\widehat{\theta}} &
}$$
\end{prop}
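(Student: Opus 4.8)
The plan is to let uniqueness dictate the definition. Since every element of $E(X,B)$ has the form $[A]$ for some $A \subset X$, and since $A = \bigcup_{a \in A}\{a\}$ gives $[A] = \sup\{[\{a\}] : a \in A\} = \sup\{i(a) : a \in A\}$, any sup-preserving $\widehat{\theta}$ with $\widehat{\theta} \circ i = \theta$ is forced to satisfy $\widehat{\theta}([A]) = \sup\{\theta(a) : a \in A\}$. I would therefore \emph{define} $\widehat{\theta}([A]) = \sup(\theta(A))$, where $\theta(A) = \{\theta(a) : a \in A\} \subset Y$ and the supremum is taken in the complete join semilattice $Y$; this makes uniqueness immediate at the end. (Note this forces $\widehat{\theta}([\varnothing])$ to be the bottom element $\sup(\varnothing)$ of $Y$.)

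The main obstacle is well-definedness: I must check that $[A] = [A']$ implies $\sup(\theta(A)) = \sup(\theta(A'))$. First I would record that, by construction of the base $B_0 = \{\mathcal{P}(U) : U \in B\}$ on $\mathcal{P}(X)$, membership $A \in \mathcal{P}(U)$ is the same as $A \subset U$, so $A \sim A'$ holds exactly when, for every $U \in B$, we have $A \subset U \iff A' \subset U$. Next I would use the hypothesis that $\theta$ is a morphism of $\mathbb{M}$ (so $\theta^{-1}(V) \in B$ for all $V \in C$) together with the fact that $(Y,C)$ is an $\mathbb{M}$--complete join semilattice. For any $A \subset X$ and $V \in C$, the defining biconditional of an $\mathbb{M}$--complete join semilattice applied to $\theta(A) \subset Y$ gives the chain
\[
\sup(\theta(A)) \in V \iff \theta(A) \subset V \iff A \subset \theta^{-1}(V).
\]
Since $\theta^{-1}(V) \in B$, the relation $A \sim A'$ yields $A \subset \theta^{-1}(V) \iff A' \subset \theta^{-1}(V)$, and hence $\sup(\theta(A))$ and $\sup(\theta(A'))$ lie in exactly the same members of the base $C$. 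As $C$ is a base of the $T_0$ space $Y$, two points contained in the same basic opens are equal, so $\sup(\theta(A)) = \sup(\theta(A'))$ and $\widehat{\theta}$ is well-defined.

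With $\widehat{\theta}$ in hand, the remaining verifications are formal. The same displayed chain shows that $[A] \in \widehat{\theta}^{-1}(V) \iff A \subset \theta^{-1}(V) \iff [A] \in \widetilde{\theta^{-1}(V)}$, so $\widehat{\theta}^{-1}(V) = \widetilde{\theta^{-1}(V)} \in \widetilde{B}$, proving $\widehat{\theta}$ is a morphism of $\mathbb{M}$. To see it preserves suprema I would use the computation recorded before the Proposition, namely $\sup\{[A_\lambda]\} = [\bigcup_\lambda A_\lambda]$ in $E(X,B)$, together with the standard identity $\sup(\bigcup_\lambda S_\lambda) = \sup_\lambda \sup(S_\lambda)$ in a complete join semilattice applied to $S_\lambda = \theta(A_\lambda)$; this gives $\widehat{\theta}(\sup\{[A_\lambda]\}) = \sup(\theta(\bigcup_\lambda A_\lambda)) = \sup_\lambda \sup(\theta(A_\lambda)) = \sup_\lambda \widehat{\theta}([A_\lambda])$. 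Commutativity of the triangle is the one-element case $\widehat{\theta}(i(x)) = \widehat{\theta}([\{x\}]) = \sup(\theta(\{x\})) = \theta(x)$. Finally, uniqueness follows exactly as in the first paragraph: the reduction $[A] = \sup\{i(a) : a \in A\}$ forces any sup-preserving extension of $\theta$ to agree with $\widehat{\theta}$ on every $[A]$.
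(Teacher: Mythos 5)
Your proposal is correct and takes exactly the paper's approach: the paper also defines $\widehat{\theta}([A])=\sup(\theta(A))$ and declares the remaining verifications routine. You have simply supplied those routine checks in full, and your well-definedness argument (reducing $A\sim A'$ to membership in the sets $\theta^{-1}(V)\in B$ and invoking $T_0$ for $Y$) is the right way to handle the one genuinely non-obvious point.
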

\begin{proof}
The map $\widehat{\theta}$ is defined by $\widehat{\theta}([A])=\sup(\theta(A))$ for all $A \subset X$. It is routine to check that $\widehat{\theta}$ is the unique morphism of $\mathbb{M}$--complete join semilattices which makes the diagram commute.
\end{proof}
\subsection{}\label{E^2=E}
We have the following corollary of Proposition \ref{Euniversal}.
\begin{cor}
$E^2=E$ as functors $\mathbb{M} \rightarrow \mathbb{M}$.
\end{cor}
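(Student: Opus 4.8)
The plan is to use the universal property of Proposition \ref{Euniversal} to prove the stronger pointwise statement: whenever $(Y,C)$ is \emph{already} an $\mathbb{M}$--complete join semilattice, the natural map $i_{(Y,C)}:(Y,C) \to E(Y,C)$ is an isomorphism. Applying this with $(Y,C)=E(X,B)$ --- which is an $\mathbb{M}$--complete join semilattice, as noted just before Proposition \ref{Euniversal} --- then yields isomorphisms $i_{E(X,B)}: E(X,B) \to E^2(X,B)$, natural in $(X,B)$, which is exactly the assertion $E^2=E$. This is the standard fact that the reflector onto a reflective subcategory is idempotent, specialized to the present setting.

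First I would construct a retraction. Since $(Y,C)$ is an $\mathbb{M}$--complete join semilattice, I apply the universal property to the identity map $\mathrm{id}:(Y,C) \to (Y,C)$, taking $\theta=\mathrm{id}$. This produces a unique morphism of $\mathbb{M}$--complete join semilattices $r:=\widehat{\mathrm{id}}: E(Y,C) \to (Y,C)$ satisfying $r \circ i_{(Y,C)} = \mathrm{id}_{(Y,C)}$. In the concrete description of $\widehat{\theta}$ given in the proof of Proposition \ref{Euniversal}, this $r$ sends $[A]$ to $\sup(A)$ computed in $(Y,C)$.

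Next I would show $i_{(Y,C)} \circ r = \mathrm{id}_{E(Y,C)}$ by a uniqueness argument. Both $i_{(Y,C)} \circ r$ and $\mathrm{id}_{E(Y,C)}$ are endomorphisms of the $\mathbb{M}$--complete join semilattice $E(Y,C)$ in the category of such objects. Precomposing with $i_{(Y,C)}$ gives $i_{(Y,C)} \circ r \circ i_{(Y,C)} = i_{(Y,C)} \circ \mathrm{id} = i_{(Y,C)}$ on the one hand and $\mathrm{id}_{E(Y,C)} \circ i_{(Y,C)} = i_{(Y,C)}$ on the other, so the two composites agree after restriction along $i_{(Y,C)}$. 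But the universal property of Proposition \ref{Euniversal}, applied to the map $\theta = i_{(Y,C)}:(Y,C) \to E(Y,C)$ into the $\mathbb{M}$--complete join semilattice $E(Y,C)$, asserts that there is a \emph{unique} morphism of $\mathbb{M}$--complete join semilattices $E(Y,C) \to E(Y,C)$ extending $i_{(Y,C)}$. Hence $i_{(Y,C)} \circ r = \mathrm{id}_{E(Y,C)}$, so $i_{(Y,C)}$ is an isomorphism with inverse $r$.

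Finally I would record naturality: the maps $i$ form a natural transformation $\mathrm{id}\to E$, so the family $i_{E(X,B)}$ is natural in $(X,B)$ and gives the claimed natural isomorphism $E \cong E^2$. The only delicate point --- the ``main obstacle'' --- is the bookkeeping in the uniqueness step: one must invoke the universal property with the correct target ($E(Y,C)$ itself, not $(Y,C)$) and check that $r=\widehat{\mathrm{id}}$ is genuinely a morphism of $\mathbb{M}$--complete join semilattices, so that both $i_{(Y,C)}\circ r$ and $\mathrm{id}_{E(Y,C)}$ lie in the hom-set to which uniqueness applies. Everything else is a purely formal diagram chase.
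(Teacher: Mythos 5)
Your argument is correct and is essentially the paper's: the paper's proof is a one-line appeal to Proposition \ref{Euniversal} together with the remark that any $\mathbb{M}$--morphism between $\mathbb{M}$--complete join semilattices automatically preserves suprema, and your retraction-plus-uniqueness chase is the standard unwinding of that appeal. The one point you gloss over --- for $i_{(Y,C)} \circ r$ to lie in the hom-set governed by the uniqueness clause you need $i_{(Y,C)}$ itself, not just $r$, to preserve suprema when $(Y,C)$ is already an $\mathbb{M}$--complete join semilattice --- is exactly the paper's noted remark, and follows directly from the defining condition $A \subset U \iff \sup(A) \in U$, which gives $[\{\sup(A)\}]=[A]$ in $E(Y,C)$.
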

\begin{proof}
This follows directly from Proposition \ref{Euniversal} if we note that if $(X,B)$ and $(Y,C)$ are $\mathbb{M}$--complete join semilattices and $f: (X,B) \rightarrow (Y,C)$ is a morphism in $\mathbb{M}$, then $f$ necessarily preserves suprema.
\end{proof}
\subsection{}\label{expcharacterization}
Now we give a topological characterization of which spaces can arise as $E(X,B)$ for some $(X,B)$. First, we need the following Lemma.
\begin{lem}\label{bloblem}
Let $(X,B)$ be an $\mathbb{M}$--complete join semilattice. Let $U \subset X$ be open. Then $U$ is a blob (see Definition \ref{blobdef}) if and only if $U \in B$.
\end{lem}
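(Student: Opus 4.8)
The plan is to prove the two implications separately, using the fact recalled in Section \ref{monoid example} that the order $\le$ making $(X,B)$ into an $\mathbb{M}$--complete join semilattice is exactly the specialization order. First I would record the concrete description of blobs in these terms: for $a \in X$, the blob determined by $a$, namely $\bigcap\{V : V \text{ open}, a \in V\}$ (see Definition \ref{blobdef}), consists of precisely those $x$ lying in every open set containing $a$, which by definition of the specialization order is the principal down-set $\{x \in X : x \le a\}$. Thus ``$U$ is a blob'' amounts to ``$U = \{x : x \le a\}$ for some $a$'', and the task is to match this description up with membership in the base $B$.

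For the implication that an open blob lies in $B$: suppose $U$ is open and $U = \bigcap\{V : V \text{ open}, a \in V\}$ for some $a$. Since $a$ lies in every open set containing $a$, we have $a \in U$. As $B$ is a base and $U$ is open with $a \in U$, I would choose $W \in B$ with $a \in W \subseteq U$. Then any $x \in U$ lies in every open set containing $a$, in particular in $W$; hence $U \subseteq W \subseteq U$ and $U = W \in B$. This direction is essentially formal and uses only that $B$ is a base, not the semilattice structure.

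The content lies in the reverse implication, that every $U \in B$ is a blob, and this is the step I expect to be the main obstacle, since it is where completeness and the defining axiom of Definition \ref{mcjsdef} enter. The idea is to produce the distinguished point by setting $a = \sup(U)$, which exists by completeness. Applying the equivalence $A \subseteq U \iff \sup(A) \in U$ with $A = U$ shows $a = \sup(U) \in U$. It then remains to verify $U = \{x : x \le a\}$: if $x \in U$ then $x \le \sup(U) = a$ because the supremum is an upper bound of $U$, giving one inclusion; conversely if $x \le a$ then, as $U$ is open and contains $a$, the specialization relation forces $x \in U$. Hence $U$ is exactly the blob determined by $a = \sup(U)$. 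The only delicate point is the interplay between the order-theoretic supremum and the topology, which is precisely what the axiom of Definition \ref{mcjsdef} is designed to control.
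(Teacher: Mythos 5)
Your proof is correct and follows essentially the same route as the paper's: the forward direction uses only that $B$ is a base (picking a basic open set around the distinguished point $a$ and squeezing), and the reverse direction takes $a=\sup(U)$, uses the axiom of Definition \ref{mcjsdef} with $A=U$ to get $\sup(U)\in U$, and identifies $U$ with the down-set of $\sup(U)$ via the specialization order. No gaps.
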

\begin{proof}
Suppose $U \subset X$ is an open blob. Then there exist $U_\lambda \in B$ with $U =\bigcup_\lambda U_\lambda$ because $B$ is a base for the topology on $X$. Since $U$ is a blob, there exists $a \in X$ such that $U$ is the intersection of all the open sets which contain $a$. Therefore, $a \in U_\lambda$ for some $\lambda$, and so $U \subset U_\lambda$. So $U=U_\lambda \in B$. Conversely, suppose $U \in B$. Then $\sup(U) \in U$ by Definition \ref{mcjsdef}. Since the order $\le$ on $X$ coincides with the specialization order (see Section \ref{monoid example}), for all $x \in X$ we have $x \in U$ if and only if $x \le \sup (U)$. Therefore, $U$ is the intersection of all the open sets which contain $\sup(U)$, so $U$ is a blob.
\end{proof}
\begin{thrm}
Let $X$ be a $T_0$ space. The following are equivalent.
\begin{enumerate}
\item The open blobs form a monoidal base of $X$, and for all $A \subset X$, $\bigcap_{x \in A} \overline{\{x\}}$ is the closure of a point.
\item If $B$ denotes the set of open blobs of $X$, then $(X,B)$ is an $\mathbb{M}$--complete join semilattice.
\item There exists a monoidal base $B$ of $X$ such that $(X,B)$ is an $\mathbb{M}$--complete join semilattice.
\item There exists a monoidal base $B$ of $X$ such that $(X,B) \cong E(X,B)$ in $\mathbb{M}$.
\item There exists a monoidal base $B$ of $X$ and an object $(Y,C)$ of $\mathbb{M}$ such that $(X,B) \cong E(Y,C)$ in $\mathbb{M}$.
\end{enumerate}
\end{thrm}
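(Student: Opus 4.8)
The plan is to prove the biconditional $(1) \Leftrightarrow (2)$ by translating everything into the language of the specialization order, and then to close the loop $(2) \Rightarrow (3) \Rightarrow (4) \Rightarrow (5) \Rightarrow (2)$ using the universal property of $E$ together with Lemma \ref{bloblem}. This isolates the real content in the single equivalence $(1) \Leftrightarrow (2)$, leaving the rest essentially formal.

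For $(1) \Leftrightarrow (2)$ I would first set up a dictionary between order theory and topology. Writing $\le$ for the specialization order, one has $\overline{\{x\}} = \{y : x \le y\}$, so that $\bigcap_{x \in A}\overline{\{x\}}$ is precisely the set of upper bounds of $A$. Since the set of elements above a fixed $z$ is exactly $\overline{\{z\}}$, this set of upper bounds is the closure of a point if and only if it equals $\overline{\{z\}}$ for some $z$, which happens exactly when $z$ is a least upper bound of $A$; by $T_0$-ness such a $z$ is unique. Thus the closure condition in $(1)$ is equivalent to saying that $(X, \le)$ is a complete join semilattice. Next I would identify the open blobs: an open blob $U$ with witness $a$ equals the intersection of all open sets containing $a$, which is just $\{x : x \le a\}$, so the open blobs are exactly the open principal down-sets, each having $a = \sup U \in U$ as its greatest element.

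With this dictionary, $(2) \Rightarrow (1)$ is immediate: an $\mathbb{M}$--complete join semilattice structure forces $\le$ to be the specialization order (Section \ref{monoid example}) and makes $X$ a complete join semilattice with monoidal base $B$, and by Lemma \ref{bloblem} that base consists exactly of the open blobs, giving both parts of $(1)$. For $(1) \Rightarrow (2)$ the only thing left to verify is the biconditional $A \subset U \iff \sup(A) \in U$ of Definition \ref{mcjsdef} for $U$ an open blob. The forward implication is where the blob hypothesis does its work: writing $U = \{x : x \le a\}$, the inclusion $A \subset U$ says that $a$ is an upper bound of $A$, whence $\sup(A) \le a$ and $\sup(A) \in U$. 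The reverse implication is automatic for any open $U$, since if $\sup(A) \in U$ then each $x \in A$, being $\le \sup(A)$, lies in $U$ by the definition of the specialization order.

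The remaining implications are formal. Here $(2) \Rightarrow (3)$ and $(4) \Rightarrow (5)$ are trivial (take the base of open blobs, respectively $(Y,C) = (X,B)$). For $(3) \Rightarrow (4)$ I would apply Proposition \ref{Euniversal} to $\theta = \mathrm{id}_{(X,B)}$ to obtain a retraction $\widehat{\theta} : E(X,B) \to (X,B)$, given by $[A] \mapsto \sup(A)$, of the map $i$; the composite $i \circ \widehat{\theta}$ then sends $[A]$ to $[\{\sup(A)\}]$, and the $\mathbb{M}$--complete join semilattice condition yields $[\{\sup(A)\}] = [A]$, so that $i$ is an isomorphism. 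For $(5) \Rightarrow (2)$, recall that $E(Y,C)$ is always an $\mathbb{M}$--complete join semilattice, so transporting the order and suprema along the isomorphism $(X,B) \cong E(Y,C)$ makes $(X,B)$ one as well, and Lemma \ref{bloblem} again identifies $B$ with the set of open blobs. The main obstacle is the single forward implication in the biconditional above; everything else is bookkeeping once the order-theoretic dictionary is in place.
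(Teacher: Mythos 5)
Your proof is correct and follows essentially the same route as the paper: the order-theoretic dictionary between the closure condition and completeness of the specialization order for $(1)\Leftrightarrow(2)$, Lemma \ref{bloblem} to identify the monoidal base with the open blobs, and the universal property of Proposition \ref{Euniversal} for the implications involving $E$. The only organizational difference is that you close the cycle via $(5)\Rightarrow(2)$ by transporting the join-semilattice structure along the isomorphism with $E(Y,C)$, where the paper instead invokes Corollary \ref{E^2=E}; both are equally valid bookkeeping.
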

\begin{proof}
The equivalence of $(3)$, $(4)$ and $(5)$ follows directly from Proposition \ref{Euniversal} and Corollary \ref{E^2=E}. Also, $(2)$ trivially implies $(3)$. It remains to show that $(1)$ and $(3)$ are equivalent.

$(1) \implies (2)$: Let $B$ be the set of all open blobs of $X$. We claim that $(X,B)$ is an $\mathbb{M}$--complete join semilattice. Define $\le$ to be the specialization order on $X$, so that $x \le y$ if and only if $y \in \overline{\{x\}}$. For $A \subset X$, there is a point $y$ such that
$\bigcap_{a \in A}\overline{\{a\}}=\overline{\{y\}}$, and we see that $y$ is the supremum of $A$ in the ordering $\le$. So $(X, \le)$ is a complete join-semilattice. Now let $U$ be an open blob and $A \subset X$. We must show that $A \subset U$ if and only if $\sup(A) \in U$. There is some $a \in X$ such that $U$ is the intersection of all open sets containing $a$. In other words, $U =\{x \in X: x \le a\}$. So if $A \subset U$ then $\sup(A) \in U$, while if $\sup(A) \in U$ then $A \subset U$ because $x \le \sup(A) \le a$ for all $x \in A$. Therefore, $(X,B)$ is an $\mathbb{M}$--complete join semilattice.

$(3) \implies (1)$:
Suppose $(X,B)$ is an $\mathbb{M}$--complete join semilattice.
%Let $U$ be an open subset of $X$. We claim that $U$ is a blob if and only if $U \in B$. To see this, let %$U=\bigcup_{\lambda}U_\lambda$ be an open blob.
%$U$ be an open blob and write $U=\bigcup_{\lambda}U_\lambda$ with $U_\lambda \in B$ for all $\lambda$.
%Since the order $\le$ on $X$ coincides with the specialization order, $U$ is of the form $U=\{x: x \le a\}$ for some $a \in X$. So $a \in U_\lambda$ for some $\lambda$, and so if $x \le a$ then $x \in U_\lambda$, and so $U=U_\lambda \in B$. Conversely, if $U \in B$ then $\sup (U) \in U$, and $x \in U$ if and only if $x \le \sup (U)$. So $U=\{x \in X: x \le \sup(U)\}$ is a blob.
%
%Therefore,
By Lemma \ref{bloblem}, $B$ is precisely the set of open blobs, and so these form a monoidal base. Now let $A \subset X$. It is easy to check that $\bigcap_{a \in A}\overline{\{a\}} = \overline{\{\sup(A)\}}$, and so we are done.
\end{proof}
\section{Monoids}
\subsection{}\label{weakprop}
In this section, we give a proof of Theorem \ref{mainthm} following \cite{Brenner}.
\begin{prop}
Let $X$ be a topological space. The following are equivalent.
\begin{enumerate}
\item $X$ is homeomorphic to $\spec(M)$ for some monoid $M$.
\item $X$ is $T_0$ and there is a monoidal base $B$ of $X$ such that $(X,B)$ is an $\mathbb{M}$--complete join semilattice and the map
    \begin{align*}\varphi: X &\rightarrow \spec(B, \cap)\\
    x &\mapsto \{U \in B: x \notin U\}
    \end{align*}
    is a bijection.
\end{enumerate}
\end{prop}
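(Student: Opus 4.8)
The plan is to prove the two implications separately, using in each case the most economical monoid and base. Before either direction I would record a preliminary that makes $\varphi$ meaningful: for every $x \in X$ the set $\varphi(x)=\{U \in B : x \notin U\}$ is a prime ideal of the monoid $(B,\cap)$. Here $(B,\cap)$ is a commutative monoid with identity $X$, which is exactly what ``monoidal base'' encodes. The set $\varphi(x)$ is an ideal because $x \notin U$ forces $x \notin U \cap V$ for any $V \in B$, and its complement $\{U \in B : x \in U\}$ is a submonoid because $x \in X$ and $x \in U$, $x \in V$ give $x \in U \cap V$. I would also record the general fact, valid for any base of a $T_0$ space, that $\varphi$ is automatically injective: if $x \neq y$ then some $U \in B$ separates them, and that $U$ lies in exactly one of $\varphi(x)$, $\varphi(y)$. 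Thus injectivity is free, and the content of ``bijection'' is really surjectivity.

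For $(2) \Rightarrow (1)$ I would take $M=(B,\cap)$ and show that $\varphi$ is itself a homeomorphism $X \to \spec(M)$. The key computation is that for each $U \in B$,
\[
\varphi^{-1}\big(D(U)\big) = \{x \in X : U \notin \varphi(x)\} = \{x \in X : x \in U\} = U.
\]
Because the sets $D(U)$ form a base of $\spec(M)$, this shows $\varphi$ is continuous; and since $\varphi$ is a bijection, the same identity yields $\varphi(U)=D(U)$, so $\varphi$ carries the base $B$ of $X$ bijectively onto the base $\{D(U)\}$ of $\spec(M)$ and is therefore a homeomorphism. Note that this direction uses only that $B$ is a monoidal base and that $\varphi$ is a bijection; the $\mathbb{M}$--complete join semilattice hypothesis is not actually needed here.

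For $(1) \Rightarrow (2)$ I would start from $X=\spec(M)$ and choose the standard base $B=\{D(f):f\in M\}$, which is monoidal since $D(1)=X$ and $D(f)\cap D(g)=D(fg)$. That $X$ is $T_0$ is standard, since two distinct primes are separated by some $D(f)$, and that $(X,B)$ is an $\mathbb{M}$--complete join semilattice is precisely the Example of Section \ref{monoid example}, with suprema given by unions of prime ideals. By the preliminary, $\varphi$ is injective, so everything reduces to surjectivity. Given a prime ideal $P$ of $(B,\cap)$, I would set $\mathfrak{p}=\{f \in M : D(f) \in P\}$ and verify, using $D(fg)=D(f)\cap D(g)$ and $D(1)=X$ throughout, that $\mathfrak{p}$ is a prime ideal of $M$: it is an ideal because $D(fm)=D(f)\cap D(m)\in P$ whenever $D(f)\in P$, and its complement is a submonoid because $X\notin P$ and the complement of $P$ in $B$ is closed under $\cap$. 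Finally $\varphi(\mathfrak{p})=\{D(f):D(f)\in P\}=P$, since every element of $B$ has the form $D(f)$ and $P\subset B$, which gives surjectivity.

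The step I expect to be the main obstacle is this surjectivity in $(1)\Rightarrow(2)$: one must manufacture a genuine prime ideal of $M$ from an abstract prime ideal of the combinatorial monoid $(B,\cap)$. The whole verification rests on the dictionary $D(fg)=D(f)\cap D(g)$ and $D(1)=X$, which translate the ideal and prime conditions on $P$ into the corresponding conditions on $\mathfrak{p}$. Everything else---well-definedness of $\varphi$, its injectivity, and the homeomorphism in the reverse direction---is routine once these preliminary observations are in place.
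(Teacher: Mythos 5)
Your proof is correct and follows essentially the same route as the paper: the same explicit correspondence $P \mapsto \{f \in M : D(f) \in P\}$ gives surjectivity in one direction, and the same identity $\varphi^{-1}(D(U)) = U$ on basic open sets gives the homeomorphism in the other. The only (valid) streamlining is that you deduce the homeomorphism directly from that identity rather than via the paper's $\sup$-based inverse $\theta$, which is what lets you observe that the join-semilattice hypothesis is not actually used in $(2)\Rightarrow(1)$.
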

\begin{proof}
Suppose $X=\spec(M)$. Then take $B=\{D(f): f \in M\}$. Then $B$ is a monoidal base of $X$ and $(X,B)$ is an $\mathbb{M}$--complete join semilattice. The map $\varphi:X \rightarrow \spec(B, \cap)$ is given by $p \mapsto \{D(g): p \notin D(g)\}=\{D(g): g \in p\}$. This has an inverse given by sending a prime ideal $Q$ of the monoid $(B, \cap)$ to the prime ideal $\{f \in M : D(f) \in Q\}$ of $M$.

Conversely, suppose $X$ has a monoidal base $B$ and the map $x \mapsto \{U \in B: x \notin U\}$ is bijective. We need only show that this map is in fact a homeomorphism. This follows from \cite[Lemma 4.3]{Kist}. Alternatively, we may define a function $\theta: \spec(B, \cap) \rightarrow X$ by $\theta(p) = \sup (\bigcap_{U \in B \setminus p} U)$. Then for all $V \in B$ and $x \in X$, $\theta \varphi(x)=\theta(\{U \in B: x \notin U\}) \in V$ if and only if $x \in V$. Therefore, $\theta\varphi(x)=x$ since $X$ is $T_0$. So $\theta$ is a one-sided inverse to $\varphi$, and therefore $\theta$ is inverse to $\varphi$ since $\varphi$ is a bijection. To show that $\theta$ is continuous, observe that $\varphi$ is an open map because for $V \in B$, $\varphi(V)=\{p \in \spec(B, \cap): V \notin p\}$.
%for $V \in B$, $\theta(\varphi(x)) \in V$ if and only if $V \notin \varphi(x)$. Thus, $\varphi$ is a homeomorphism.
\end{proof}
\subsection{}\label{weakprop2}
\begin{prop}
Let $X$ be a topological space. The following are equivalent.
\begin{enumerate}
\item $X$ is homeomorphic to $\spec(M)$ for some monoid $M$.
\item $X$ is $T_0$ and there is a monoidal base $B$ of $X$ such that $(X,B)$ is an $\mathbb{M}$--complete join semilattice and the condition $(*)$ of Theorem \ref{mainthm} holds for $X$.
\end{enumerate}
\end{prop}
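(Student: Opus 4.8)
The plan is to deduce this from Proposition \ref{weakprop}. Condition (1) is identical in both propositions, and both versions of condition (2) require $X$ to be $T_0$ with a monoidal base $B$ making $(X,B)$ an $\mathbb{M}$--complete join semilattice; so it suffices to prove that, for such a pair $(X,B)$, the map $\varphi$ of Proposition \ref{weakprop} is a bijection if and only if $X$ satisfies $(*)$. By Lemma \ref{bloblem} the open blobs of $X$ are exactly the members of $B$, so $(*)$ may be read as a statement purely about families in $B$. Moreover $\varphi$ is automatically injective: if $x \neq y$ then, since $X$ is $T_0$ and $B$ is a base, some $U \in B$ contains exactly one of $x,y$, whence $\varphi(x) \neq \varphi(y)$. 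Thus the entire content is the equivalence ``$\varphi$ is surjective $\iff (*)$''.

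First I would record the elementary dictionary between $\spec(B,\cap)$ and filters of the meet-semilattice $(B,\cap)$. For a prime ideal $p$ the complement $F = B \setminus p$ is a submonoid, hence closed under $\cap$ and containing $X$, and the ideal property forces $F$ to be upward closed for $\subseteq$: if $U \in F$ and $U \subseteq V \in B$ then $V \in F$, for otherwise $U = U \cap V \in p$. Conversely the complement of any such filter is a prime ideal. Under this dictionary $\varphi(x)$ corresponds to the neighbourhood filter $F_x = \{U \in B : x \in U\}$, so $\varphi$ is surjective precisely when every filter $F$ of $(B,\cap)$ equals $F_x$ for some $x \in X$. (The axiom of Definition \ref{mcjsdef} applied to $A = \varnothing$ forces $\varnothing \notin B$, so no degenerate prime ideal can obstruct surjectivity.)

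For the direction $(*) \implies$ surjectivity I would use the candidate inverse $\theta(p) = \sup\bigl(\bigcap_{U \in F} U\bigr)$ from the proof of Proposition \ref{weakprop}, where $F = B \setminus p$, and show $F_{\theta(p)} = F$. The inclusion $F \subseteq F_{\theta(p)}$ is immediate from Definition \ref{mcjsdef}: for $U \in F$ we have $\bigcap_{V \in F} V \subseteq U$, so $\theta(p) = \sup\bigl(\bigcap_{V \in F} V\bigr) \in U$. For the reverse inclusion suppose $\theta(p) \in U$ with $U \in B$; the same axiom gives $\bigcap_{V \in F} V \subseteq U$, and now $(*)$ produces finitely many $V_1, \dots, V_n \in F$ with $V_1 \cap \cdots \cap V_n \subseteq U$. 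Since $F$ is a filter, $V_1 \cap \cdots \cap V_n \in F$, and since $F$ is upward closed, $U \in F$. Hence $F_{\theta(p)} = F$, so $\varphi$ is surjective. I expect this to be the main obstacle, since it is exactly here that both the join-semilattice axiom and the compactness-type hypothesis $(*)$ are indispensable, and one must take the intersection over the whole filter $F$ rather than over a mere generating family.

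For the converse, surjectivity $\implies (*)$, I would take a family $\{U_\lambda : \lambda \in \Lambda\} \subseteq B$ and an open set $U$ with $\bigcap_\lambda U_\lambda \subseteq U$, and pass to the filter $F$ generated by the $U_\lambda$, namely the members of $B$ containing some finite sub-intersection $U_{\lambda_1} \cap \cdots \cap U_{\lambda_n}$. Surjectivity yields $x$ with $F_x = F$; then $x \in U_\lambda$ for every $\lambda$, so $x \in \bigcap_\lambda U_\lambda \subseteq U$, and choosing $W \in B$ with $x \in W \subseteq U$ gives $W \in F_x = F$. By construction $W \supseteq U_{\lambda_1} \cap \cdots \cap U_{\lambda_n}$ for some finite subfamily, whence $\bigcap_{i=1}^n U_{\lambda_i} \subseteq W \subseteq U$, which is precisely $(*)$. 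Combining the two directions with Proposition \ref{weakprop} completes the argument.
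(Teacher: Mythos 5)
Your proposal is correct and follows essentially the same route as the paper: both directions reduce to Proposition \ref{weakprop} by showing that, for an $\mathbb{M}$--complete join semilattice $(X,B)$, surjectivity of $\varphi$ is equivalent to $(*)$, using the same inverse candidate $\theta(p)=\sup\bigl(\bigcap_{W \in B\setminus p} W\bigr)$ in one direction and the same prime ideal (complement of the filter generated by the $U_\lambda$) in the other. Your explicit filter dictionary and the observation that $\varnothing \notin B$ are just cleaner packaging of the same argument.
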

\begin{proof}
Suppose $X$ is homeomorphic to $\spec(M)$. Then $X$ is $T_0$. Let $B=\{D(f):f \in M\}$, a monoidal base of $X$. By Example \ref{monoid example}, $(X,B)$ is an $\mathbb{M}$--complete join semilattice. By
%the proof of Theorem \ref{expcharacterization}%
Lemma \ref{bloblem}, the open blobs of $X$ are precisely the elements of $B$. Suppose $U_\lambda$, $\lambda \in \Lambda$, are open blobs. Then
$$p=\{U \in B: U \text{ does not contain any finite intersection of the }
U_\lambda \}$$
is a prime ideal of the monoid $(B, \cap)$. By the proof of Proposition \ref{weakprop}, there exists $x \in M$ such that $p =\{U \in B: x \notin U\}$. In other words, for any $U \in B$, $x \in U$ if and only if $U$ contains some $\bigcap_{i=1}^n U_{\lambda_i}$. In particular, $x \in U_\lambda$ for all $\lambda$ and so $x \in\bigcap_\lambda U_\lambda$. Suppose $U$ is an open set and $U \supset \bigcap_\lambda U_\lambda$. Then $x \in U$ and so $x$ is contained in some open blob $U' \subset U$, since the open blobs are a base. So $U' \subset U$ contains some finite intersection of the $U_{\lambda}$. Thus, $(*)$ holds.

Conversely, if $X$ satisfies $(2)$, then by Proposition \ref{weakprop}, we just need to show that the function $\varphi:X \rightarrow \spec(B, \cap)$ defined by $\varphi(x)=\{U: x \notin U\}$ is a bijection. We define $\theta: \spec(B, \cap) \rightarrow X$ by $\theta (p)=\sup(\bigcap_{W \in B \setminus p} W)$. We have seen in the proof of Proposition \ref{weakprop} that $\theta \varphi$ is the identity, so we need to show that if $p$ is a prime ideal of $(B, \cap)$, then $\varphi\theta(p)=p$. We have $\varphi\theta(p)=\{U \in B: U \nsupseteq \bigcap_{W \in B\setminus p} W\}$. Thus, for $V \in B$, we see that $V \notin \varphi\theta(p)$ if and only if $V \supset \bigcap_{W \in B\setminus p} W$. By $(*)$ combined with Lemma \ref{bloblem}, this holds if and only if there are $W_1, \ldots, W_n \notin p$ with $V \supset \bigcap_{i=1}^n W_i$. Since $p$ is a prime ideal, this holds if and only if $V \notin p$. Thus, $\varphi\theta(p)=p$ as required.
\end{proof}
\subsection{}
We can also prove directly from the definition of $\spec(M)$ that $\spec(M)$ satisfies the condition $(*)$ of Theorem \ref{mainthm}. This can be done by first reducing to the case of an inclusion of basic open sets $\bigcap_{\lambda \in \Lambda} D(f_\lambda) \subset D(g)$ and then considering the prime ideal $p=M\setminus q$ where
$$q=\{h \in M: \exists t \in M, \lambda_1, \ldots, \lambda_n \in \Lambda, k_1, \ldots, k_n \ge 0 \text{ with } th=f_{\lambda_1}^{k_1} \cdots f_{\lambda_n}^{k_n}\}.$$
The details are left as an exercise.
\subsection{}\label{sober}
We are now ready to prove Theorem \ref{mainthm}. We use the following lemma.
\begin{lem}
Let $M$ be a monoid. Then every irreducible closed subset of $X=\spec(M)$ is the closure of a point.
\end{lem}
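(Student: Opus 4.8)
The plan is to produce an explicit generic point for an irreducible closed set $C$ by taking the intersection of all the primes lying in $C$. First I would record the basic description of the topology: the complement of a basic open $D(f)$ is $V(f)=\{p\in\spec(M):f\in p\}$, so every closed set has the form $V(S)=\{p:S\subseteq p\}$ for some $S\subseteq M$ (take $S$ to index the basic opens whose union is the complement). I would also note that under the specialization order $p\le q\iff p\subseteq q$ described in Section \ref{monoid example}, the closure of a point $q$ is exactly $\overline{\{q\}}=\{p:q\subseteq p\}=V(q)$. Thus the task reduces to finding, for a given (nonempty) irreducible closed $C$, a prime $q$ with $C=V(q)$; the natural candidate is $q=\bigcap_{p\in C}p$.

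Next I would verify that $q=\bigcap_{p\in C}p$ is a prime ideal. That $q$ is an ideal and that $1\notin q$ are immediate from the corresponding facts for each $p\in C$. The one substantive point --- and the main obstacle --- is showing that $M\setminus q=\bigcup_{p\in C}(M\setminus p)$ is closed under multiplication, since a union of submonoids need not be a submonoid. This is exactly where irreducibility enters. Given $a,b\notin q$, there are primes in $C$ avoiding $a$ and $b$ respectively, so $D(a)\cap C$ and $D(b)\cap C$ are both nonempty. Using the identity $D(a)\cap D(b)=D(ab)$ (which holds because $p$ is prime: $ab\notin p$ iff $a\notin p$ and $b\notin p$) together with the standard characterization of irreducibility --- any two open sets meeting $C$ have intersection meeting $C$ --- I would conclude $D(ab)\cap C\ne\varnothing$, so some prime in $C$ avoids $ab$, i.e. $ab\notin q$. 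Hence $M\setminus q$ is a submonoid and $q$ is prime.

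Finally I would check $C=\overline{\{q\}}=V(q)$. The inclusion $C\subseteq V(q)$ is clear since $q\subseteq p$ for every $p\in C$. For the reverse, write $C=V(S)$ as above; then $S\subseteq p$ for all $p\in C$ forces $S\subseteq q$, and since $V$ is inclusion-reversing this gives $V(q)\subseteq V(S)=C$. Therefore $C=V(q)=\overline{\{q\}}$, with $q$ a genuine point of $\spec(M)$ by the previous paragraph. (Uniqueness of the point is automatic because $X$ is $T_0$, though only existence is claimed.) Aside from the primality step, every verification here is routine bookkeeping with the base $\{D(f)\}$ and the specialization order.
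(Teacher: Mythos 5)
Your proof is correct and follows essentially the same route as the paper's: both take the intersection of the primes in the irreducible closed set and use the identity $D(x)\cap D(y)=D(xy)$ together with irreducibility to show this intersection is prime, then identify the closed set with the closure of that point. The only cosmetic difference is that you argue directly that closed sets have the form $V(S)$, where the paper cites Vezzani for this description.
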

\begin{proof}
This is like the analogous fact from algebraic geometry. By \cite[Section 2]{Vezzani}, every closed subset of $X$ has the form $V(I)=\{p: p \supset I\}$ where $I$ is an intersection of prime ideals of $M$. If $V(I)$ is irreducible and $x, y \notin I$, then $V(I) \cap D(x) \neq \varnothing$ and $V(I) \cap D(y) \neq \varnothing$, and so $V(I) \cap D(x) \cap D(y)=V(I) \cap D(xy) \neq \varnothing$ by irreducibility. Therefore, $xy \notin I$ and so $I$ is prime, Similarly, if $I$ is prime then $V(I)$ is irreducible.

Thus, the irreducible closed sets of $X$ are precisely the sets $V(p)$ for $p \in X$. But $V(p)=\overline{\{p\}}$, as required.
\end{proof}
\subsection{Proof of Theorem \ref{mainthm}}
Let $X=\spec(M)$ for a monoid $M$. By Proposition \ref{weakprop2}, there is a monoidal base of $X$ such that $(X,B)$ is an $\mathbb{M}$--complete join semilattice. By Theorem \ref{expcharacterization}, the open blobs of $X$ form a monoidal base of $X$. By Lemma \ref{sober}, every closed irreducible subset of $X$ is the closure of a point. By Theorem \ref{expcharacterization} again, every intersection of closed irreducible subsets of $X$ is the closure of a point. This point is unique because $X$ is a $T_0$ space. By Theorem \ref{weakprop2}, the condition $(*)$ of Theorem \ref{mainthm} holds. Thus, all the conditions of Theorem \ref{mainthm} are satisfied by $X$.

Conversely, suppose $X$ satisfies the conditions of Theorem \ref{mainthm}. Let $B$ be the set of open blobs of $X$. Since $\overline{\{x\}}$ is an irreducible closed set for any $x \in X$, condition $(1)$ of Theorem \ref{expcharacterization} holds for $X$. By Theorem \ref{expcharacterization}, $(X,B)$ is an $\mathbb{M}$--complete join semilattice. Since $X$ satisfies $(*)$, we get from Proposition \ref{weakprop2} that $X$ is homeomorphic to $\spec(M)$ for some $M$.

This completes the proof of Theorem \ref{mainthm}.
\section{Remarks and applications}
\subsection{}
The proof of Theorem \ref{mainthm} %given in \cite{Brenner} 
implies that if $M$ is a monoid then $\spec(M)$ is homeomorphic to $\spec(B, \cap)$ where $B = \{D(f): f \in M\}$. The monoid $(B, \cap)$ is known as the \emph{universal semilattice} of $M$ (\cite[III.1.4]{Grillet}). Thus, every monoid spectrum is the spectrum of a semilattice. From this, we get a second characterization of monoid spectra as those spaces which can be obtained as the soberification (space of closed irreducible subsets) of some meet-semilattice $(P, \le)$ with greatest element, equipped with the topology whose open sets are the lower order ideals.
\subsection{The necessity of $(*)$}
The condition $(*)$ of Theorem \ref{mainthm} is ugly-looking. However, it is not possible to remove it (or some equivalent condition). To see this, we must exhibit a space which satisfies the first three conditions of Theorem \ref{mainthm} but is not homeomorphic to $\spec(M)$ for any monoid $M$. In view of Theorem \ref{expcharacterization}, we just need to exhibit a space $(X,B)$ such that $E(X,B)$ does not satisfy $(*)$.
\begin{example}
\begin{rm}
Let $X$ be a space with a monoidal base $B$ such that there are $U_n \in B$ with $\bigcap_{n \in \mathbb{N}} U_n = \varnothing$, but no finite intersection of the $U_n$ is empty. For example, take $X=\mathbb{R}$ with $B$ the collection of all open subsets of $\mathbb{R}$, and let $U_n =(0,\frac{1}{n})$ for $n \in \mathbb{N}$. Then from Definition \ref{expdef}, we see that $\bigcap_n \widetilde{U_n}= \{[A] \in \mathcal{P}(X)/\sim \text{ such that } A \subset \bigcap_n U_n \} =\{[\varnothing]\}$. Thus, $\bigcap_n \widetilde{U_n} \subset \widetilde{\varnothing}$. But no finite intersection of the $\widetilde{U_n}$ is contained in $\widetilde{\varnothing}$, and thus $(*)$ does not hold for $E(X,B)$.
\end{rm}
\end{example}
\subsection{}
A topological space $X$ is \emph{artinian} if every descending chain $U_1 \supset U_2 \supset \cdots$ of open sets terminates. Any artinian space (or even a space whose open blobs satisfy the descending chain condition) automatically satisfies $(*)$. Combining this observation with \cite[Proposition 5.5]{Kato}, we have the following corollary of Theorem \ref{mainthm}.
\begin{cor}\label{finitecase}
Let $X$ be a topological space. The following are equivalent.
\begin{enumerate}
\item $X$ is homeomorphic to $\spec(M)$ for some finite monoid $M$.
\item $X$ is homeomorphic to $\spec(M)$ for some finitely-generated monoid $M$.
\item $X$ is the underlying space of $E(Y,B)$ for some finite space $Y$ with a monoidal base $B$.
\end{enumerate}
%A space $X$ is homeomorphic to $\spec(M)$ for some finite monoid $M$ if and only if $X$ is the underlying space of $E(Y,C)$ for some finite space $Y$ with a monoidal base $C$.
\end{cor}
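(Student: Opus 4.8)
The plan is to prove the corollary as the cycle of implications $(1)\Rightarrow(2)\Rightarrow(3)\Rightarrow(1)$, feeding the general machinery of the previous sections into the single special feature of finiteness. Two structural facts will carry the argument. First, a finite space is artinian, so by the observation made just before the statement it automatically satisfies the condition $(*)$ of Theorem~\ref{mainthm}. Second, if $X$ is finite then its set $B$ of open blobs is finite, so the monoid $(B,\cap)$ that realizes $X$ as a monoid spectrum in Proposition~\ref{weakprop} is itself a \emph{finite} monoid. The only external input is \cite[Proposition 5.5]{Kato}, which guarantees that $\spec(M)$ is finite whenever $M$ is finitely generated.

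The implication $(1)\Rightarrow(2)$ is immediate, since a finite monoid is finitely generated. For $(2)\Rightarrow(3)$, suppose $X\cong\spec(M)$ with $M$ finitely generated; by \cite[Proposition 5.5]{Kato} the space $X$ is finite. By Example~\ref{monoid example}, taking $B=\{D(f):f\in M\}$ makes $(X,B)$ an $\mathbb{M}$--complete join semilattice, so by the equivalence of conditions $(3)$ and $(4)$ in Theorem~\ref{expcharacterization} we have $(X,B)\cong E(X,B)$ in $\mathbb{M}$. Setting $Y=X$, which is finite, exhibits $X$ as the underlying space of $E(Y,B)$, which is exactly $(3)$.

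For $(3)\Rightarrow(1)$, suppose $X$ is homeomorphic to the underlying space of $E(Y,B)$ for a finite space $Y$. Since $Y$ is finite the power set $\mathcal{P}(Y)$ is finite, hence so is the quotient $E(Y,B)=\mathcal{P}(Y)/\sim$, and therefore $X$ is finite. Transporting the monoidal base $\widetilde{B}$ of $E(Y,B)$ along the homeomorphism yields a monoidal base of $X$ realizing condition $(5)$ of Theorem~\ref{expcharacterization}; since conditions $(5)$ and $(2)$ of that theorem are equivalent, writing $B'$ for the set of open blobs of $X$ makes $(X,B')$ an $\mathbb{M}$--complete join semilattice, and $B'$ is finite because $X$ is. As $X$ is finite it is artinian, so by the observation preceding the corollary the condition $(*)$ holds for $X$. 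Proposition~\ref{weakprop2} then gives $X\cong\spec(M)$ for some monoid $M$; tracing the construction back through Proposition~\ref{weakprop} identifies this monoid as $(B',\cap)$, which is finite. This proves $(1)$ and closes the cycle.

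The real content of the argument sits in two places, and these are where I expect care rather than genuine difficulty to be needed. The first is the finiteness of $\spec(M)$ for finitely generated $M$, which I am importing from \cite[Proposition 5.5]{Kato}; conceptually this reflects that a prime ideal is determined by which of the finitely many generators lie in its complementary submonoid, giving at most $2^{n}$ primes. The second is the bookkeeping showing that a finite $X$ can be realized by a \emph{finite} monoid: everything hinges on recognizing the monoid of Proposition~\ref{weakprop} as $(B',\cap)$ and on the elementary fact that a finite space has only finitely many open blobs. Neither point is deep, but the proof must make the passage between ``finite space'' and ``finite monoid'' completely explicit, since Propositions~\ref{weakprop} and~\ref{weakprop2} are stated only for arbitrary monoids and say nothing about cardinality on their own.
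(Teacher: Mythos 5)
Your proof is correct and follows exactly the route the paper intends: the paper proves this corollary only by the one-line remark that finite (hence artinian) spaces satisfy $(*)$ combined with \cite[Proposition 5.5]{Kato}, and your cycle $(1)\Rightarrow(2)\Rightarrow(3)\Rightarrow(1)$ is a faithful and careful elaboration of that sketch, correctly identifying the realizing monoid as the finite monoid $(B',\cap)$ of open blobs via Propositions \ref{weakprop} and \ref{weakprop2}.
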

\subsection{}
Comparing Theorem \ref{mainthm} to Theorem \ref{Hochster}, and noting that the compact open subsets of $\spec(M)$ are just the finite unions of sets of the form $D(f)$, we observe that the underlying space of the Kato spectrum is always the underlying space of the spectrum of some commutative ring.
\subsection{Questions}
Theorem \ref{mainthm} gives rise to some natural questions.
\begin{itemize}
\item Is there a better way to express the condition $(*)$ of Theorem \ref{mainthm}?
\item Is there a characterization of spaces of the form $\spec(M)$ analogous to Hochster's characterization (\cite[Proposition 10]{Hochster}) of the underlying spaces of ring spectra as projective limits of finite $T_0$ spaces?
\end{itemize}
%\bibliographystyle{alpha}
%\bibliography{penguin}

\end{document}